\newtheorem{lemma}{Lemma}[section]
\newtheorem{proposition}[lemma]{Proposition}
\newtheorem{corollary}[lemma]{Corollary}
\newtheorem{result}{Theorem}
\newtheorem*{main}{Main Theorem}
\theoremstyle{definition}
\newcommand{\abs}[1]{\ensuremath{\left| #1 \right|}}
\newcommand{\op}{\operatorname}
\newcommand{\ce}[2]{\operatorname{C}_{#1}(#2)}
\newcommand{\no}[2]{\operatorname{N}_{#1}(#2)}
\newcommand{\ze}[1]{\operatorname{Z}(#1)}
\newcommand{\rup}[2]{\op{O}^{#1}(#2)}
\newcommand{\syl}[2]{\op{Syl}_{#1}\left(#2\right)}
\newcommand{\groupgen}[1]{\langle #1 \rangle}
\newcommand{\irr}{\operatorname{Irr}}
\newcommand{\lin}{\operatorname{Lin}}
\newcommand{\aut}{\operatorname{Aut}}
\newcommand{\gal}{\operatorname{Gal}}
\newcommand{\cycl}[1]{\mathbb{Q}_{#1}}
\newcommand{\smid}{\! \mid \!}
\newcommand{\Q}{\mathbb{Q}}
\def\Aut{\mathrm{Aut}}
\renewcommand{\phi}{\varphi}
\renewcommand{\epsilon}{\varepsilon}
\DeclareMathSymbol{\sminus}{\mathbin}{AMSa}{"39}
\title{Odd degree rational characters and the order of rational elements in finite groups}
\date{\today}
\author{Nicola Grittini \thanks{The author is grateful to INPS - Istituto Nazionale di Previdenza Sociale for supporting him financially during his research.}}
\begin{document}

\maketitle

\begin{abstract}
We prove that, in a finite group, if every rational irreducible character has odd degree, then all rational elements are $2$-elements, as it was originally conjectured by P. H. Tiep and H. P. Tong-Viet.
\end{abstract}

\section{Introduction}

Let $G$ be a finite group. An irreducible character $\chi \in \irr(G)$ is said to be \textit{rational} if $\chi(x) \in \Q$ for each $x \in G$. On  the other hand, an element $x \in G$ is said to be \textit{rational} in $G$ if it is $G$-conjugate to every generator of the cyclic group $\groupgen{x}$, and the conjugacy class $x^G$ is said to be rational if $x$ is rational.

It is known that all the irreducible characters of a group $G$ are rational if and only if all the conjugacy classes of $G$ are rational, while the principal character is the only rational irreducible character of $G$ if and only if $G$ has odd order and, therefore, it does not have any rational elements other then the identity. Moreover, it is proved in \cite{Navarro-Tiep:Rational_irreducible_characters} that a finite group $G$ has exactly two rational irreducible characters if and only if it has exactly two rational conjugacy classes and, in this case, both rational irreducible characters have odd degree. On the other hand, it is easy to see that, if $G$ has exactly two rational conjugacy classes, then the non-trivial rational class needs to consist of elements of order $2$. Hence, it makes sense to conjecture, as it was done by P. H. Tiep and H. P. Tong-Viet in \cite{Tiep-TongViet:Odd-degree_rational}, that, if every rational irreducible character of a group $G$ has odd degree, then all rational elements of $G$ have order a power of $2$. In this paper, we prove this conjecture.



\begin{main}
Let $G$ be a finite group. If every rational irreducible character of $G$ has odd degree, then all rational elements of $G$ are $2$-elements.
\end{main}

The conjecture was already proved in \cite{Tiep-TongViet:Odd-degree_rational} for a fairly large number of cases. In fact, it was proved that, if all the rational irreducible characters of a finite group $G$ have odd degree and no simple group $\op{PSL}_2(3^f)$, with $f$ odd, is involved in $G$, then $G$ is solvable. Then, the conjecture was proved in this solvable context. In this paper, we are able to generalize the approach adopted in \cite{Tiep-TongViet:Odd-degree_rational} for solvable groups, itself partially borrowed from \cite{Navarro-Tiep:Rational_irreducible_characters}, to the non-solvable case.

As it was already noted in \cite{Tiep-TongViet:Odd-degree_rational}, the converse of our Main Theorem does not hold, since the dihedral group $D_8$ has a rational irreducible character of degree $2$. On the other hand, however, the proof for the solvable case presented in \cite{Tiep-TongViet:Odd-degree_rational} implies that, if a group $G$ has a rational element of order $p$, then it has a rational $B_p$-character, according to the definition introduced by Isaacs in \cite{Isaacs:Characters_pi-separable_groups}. It should not be too difficult to prove that, in this case, the converse actually holds and, if $G$ has a rational $B_p$-character, then it has a rational $p$-element (and it is also fairly easy to see that a rational $B_p$-character, for $p$ odd, must be of even degree). Of course, the fact that $B_p$-characters are defined only for $p$-solvable groups strongly limits the extent to which we could apply this property. If, however, we can find a way to extend the definition of $B_p$-characters also for non-solvable groups, as our Lemma~\ref{lemma:existing_over} might suggest, then we may find a new interesting relation between rational characters and rational elements in finite groups.

\section{Results}

We first need to prove a result on the extension of characters in non-abelian factor groups. We begin with a proposition on \textit{character triples isomorphisms} (see \cite[Definition~11.23]{Isaacs}).

\begin{proposition}
\label{proposition:triples_isomorphism}
Let $(G,N,\phi)$ and $(\Gamma,A,\lambda)$ be isomorphic character triples and suppose that $A \leq \Gamma'$ and $\lambda$ is linear. Let $p$ be a prime such that $p \mid o(\lambda)$, then $p \mid \phi(1)o(\phi)$.
\end{proposition}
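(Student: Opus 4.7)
The strategy is to transfer a degree-divisibility statement from $(\Gamma, A, \lambda)$ to $(G, N, \phi)$ via the character triple isomorphism, and then recover information about $\phi(1) o(\phi)$ through a standard extension theorem.

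First I would show that $o(\lambda) \mid \psi(1)$ for every $\psi \in \irr(\Gamma \mid \lambda)$, using $A \leq \Gamma'$. Since $\lambda$ is $\Gamma$-invariant and linear, Clifford's theorem gives $\psi|_A = \psi(1)\lambda$, and consequently $\det(\psi)|_A = \lambda^{\psi(1)}$. But $\det(\psi)$ is a linear character of $\Gamma$, so it is trivial on $\Gamma' \supseteq A$. Hence $\lambda^{\psi(1)} = 1_A$, which is exactly $o(\lambda) \mid \psi(1)$.

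The character triple isomorphism produces a bijection $\chi \leftrightarrow \psi$ between $\irr(G \mid \phi)$ and $\irr(\Gamma \mid \lambda)$ preserving the ratios of degrees, i.e., $\chi(1)/\phi(1) = \psi(1)/\lambda(1) = \psi(1)$. Combined with the first step, this gives $o(\lambda) \mid \chi(1)/\phi(1)$ for every $\chi \in \irr(G \mid \phi)$, and in particular $p \mid \chi(1)/\phi(1)$.

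To finish, I would argue by contradiction. Assume $p \nmid \phi(1) o(\phi)$ and let $P \leq G$ be the full preimage of a Sylow $p$-subgroup of $G/N$. Since $[P:N]$ is a $p$-power coprime to $\phi(1) o(\phi)$, Gallagher's extension theorem (\cite[Theorem~6.26]{Isaacs}) furnishes an extension $\phi_P \in \irr(P)$ of $\phi$. Decomposing $\phi_P^G = \sum n_i \chi_i$ with $\chi_i \in \irr(G \mid \phi)$ and dividing by $\phi(1)$ gives $\sum n_i (\chi_i(1)/\phi(1)) = [G:P]$, which is a $p'$-integer; hence at least one constituent $\chi_i$ has $p \nmid \chi_i(1)/\phi(1)$, contradicting the second step. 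Therefore $p \mid \phi(1) o(\phi)$.

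I expect the main obstacle to be largely expositional: one must resist applying the extension theorem to $G$ itself (where the hypothesis would not suffice) and instead work with the Sylow preimage $P$, and one must correctly identify every irreducible constituent of $\phi_P^G$ as lying in $\irr(G \mid \phi)$. The degree-divisibility at the core of the argument is a short determinantal computation, immediate from the observation that linear characters of $\Gamma$ are trivial on $A \leq \Gamma'$.
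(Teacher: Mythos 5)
Your proof is correct, but the contradiction is reached by a different mechanism than in the paper, even though the two arguments share their main ingredients (the degree-ratio preservation of character triple isomorphisms, \cite[Lemma~11.24]{Isaacs}, and the coprime extension of $\phi$ to the preimage $P$ of a Sylow $p$-subgroup of $G/N$). The paper transfers the extension $\hat{\phi}\in\irr(P\mid\phi)$ across the triple isomorphism to the corresponding subgroup $Q$ of $\Gamma$, deduces that $\lambda$, and hence its $p$-part $\lambda_p$, extends to $Q$, and then uses \cite[Theorem~6.26]{Isaacs} to extend $\lambda_p$ to all of $\Gamma$; the resulting nontrivial linear character of $\Gamma$ cannot be trivial on $A\leq\Gamma'$, which is the contradiction. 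You instead use $A\leq\Gamma'$ at the outset, via the determinant computation $\det(\psi)_A=\lambda^{\psi(1)}=1_A$, to get $o(\lambda)\mid\psi(1)$ for every $\psi\in\irr(\Gamma\mid\lambda)$, transfer this as $p\mid\chi(1)/\phi(1)$ for every $\chi\in\irr(G\mid\phi)$, and then contradict it by inducing $\phi_P$ to $G$ and counting degrees, since $\sum n_i\,\chi_i(1)/\phi(1)=\abs{G:P}$ is prime to $p$. Your route only needs the top-level bijection $\irr(G\mid\phi)\leftrightarrow\irr(\Gamma\mid\lambda)$ rather than the correspondence at the intermediate subgroups $P$ and $Q$, and it replaces the appeal to the Sylow-controlled extension theorem for linear characters by an elementary determinant-plus-counting argument; the paper's version, in exchange, is slightly shorter once Theorem~6.26 is quoted. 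One small correction: the coprime extension result you invoke for extending $\phi$ to $P$ is \cite[Corollary~6.28]{Isaacs} (as cited in the paper), not Theorem~6.26, which is the separate statement about linear characters of $p$-power order that the paper uses later in its own argument.
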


\begin{proof}
Suppose $p \nmid \phi(1)o(\phi)$, let $P/N$ be a Sylow $p$-subgroup of $G/N$ and let $Q/A$ be a Sylow $p$-subgroup of $\Gamma/A$ corresponding to $P/N$ under the isomorphism of character triples. Since $p \nmid \phi(1)o(\phi)$, we have by \cite[Corollary~6.28]{Isaacs} that $\phi$ extends to $\hat{\phi} \in \irr(P \smid \phi)$. Let $\hat{\lambda} \in \irr(Q \smid \lambda)$ corresponding to $\hat{\phi}$ under the isomorphism of character triples, then it follows from \cite[Lemma~11.24]{Isaacs} that $\hat{\lambda}(1)/\lambda(1) = \hat{\phi}(1)/\phi(1) = 1$ and, thus, $\lambda$ extends to $Q$. In particular, the $p$-part $\lambda_p \neq 1_A$ of $\lambda$ also extends to $Q$ and it follows from \cite[Theorem~6.26]{Isaacs} that $\lambda_p$ extends to $\Gamma$, in contradiction with the assumption that $A \leq \Gamma'$.
\end{proof}

For a finite group $S$, we will write $M(S)$ to denote the \textit{Schur multiplier} of $S$. The reader can see \cite[Definition~11.12]{Isaacs} for a definition of Schur multiplier.

\begin{corollary}
\label{corollary:extension_simple_groups}
Let $G$ be a finite group and let $N \lhd G$. Let $p$ be a prime and let $\phi \in \irr(N)$ be a $G$-invariant character such that $\phi(1)o(\phi)$ is a power of $p$. Suppose that $p \nmid \abs{M(G/N)}$, then $\phi$ extends to $G$.
\end{corollary}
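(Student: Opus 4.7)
The plan is to reduce to the linear case via a character triple isomorphism and then exploit the hypothesis on the Schur multiplier through the five-term exact sequence for central extensions.

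The first step is to invoke \cite[Theorem~11.28]{Isaacs} to replace $(G,N,\phi)$ by an isomorphic character triple $(\Gamma, A, \lambda)$ with $A \le \ze{\Gamma}$ cyclic, $\Gamma/A \cong G/N$, and $\lambda \in \lin(A)$ linear and faithful. By \cite[Lemma~11.24]{Isaacs}, exactly as used in the proof of Proposition~\ref{proposition:triples_isomorphism}, $\phi$ extends to $G$ if and only if $\lambda$ extends to $\Gamma$, so it is enough to show the latter.

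The second step is to argue that $|A|$ is a power of $p$. The triple can be arranged so that $A \le \Gamma'$: any part of $A$ that injects into $\Gamma^{ab}$ contributes a factor of $\lambda$ that extends automatically by divisibility of $\mathbb{C}^*$, so one may reduce to the subtriple supported in $A \cap \Gamma'$. Once $A \le \Gamma'$, Proposition~\ref{proposition:triples_isomorphism} forces every prime dividing $o(\lambda) = |A|$ to also divide $\phi(1)o(\phi)$, which is a power of $p$; hence $|A|$ is a $p$-power.

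For the third step, since $\lambda$ is faithful on $A$, $\lambda$ extends to $\Gamma$ if and only if $A \cap \Gamma' = 1$: any linear extension must factor through $\Gamma^{ab}$ and therefore be trivial on $\Gamma' \cap A$, so faithfulness gives $A \cap \Gamma' = 1$; conversely, if $\lambda$ is trivial on $A \cap \Gamma'$ then $\lambda$ descends through the inclusion $A/(A \cap \Gamma') \hookrightarrow \Gamma^{ab}$ and extends to $\Gamma^{ab}$ by divisibility. The Hopf-formula five-term exact sequence
\[
M(\Gamma) \longrightarrow M(G/N) \longrightarrow A \longrightarrow \Gamma/\Gamma' \longrightarrow (G/N)/(G/N)' \longrightarrow 0
\]
attached to the central extension $1 \to A \to \Gamma \to G/N \to 1$ identifies $A \cap \Gamma'$ with an epimorphic image of $M(G/N)$, so $|A \cap \Gamma'|$ divides $|M(G/N)|$ and is coprime to $p$. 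Combined with the second step, $A \cap \Gamma'$ is simultaneously a $p$-group and of order coprime to $p$, hence trivial; so $\lambda$ extends to $\Gamma$ and $\phi$ extends to $G$.

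The main obstacle I expect is the second step, namely ensuring that $|A|$ is a $p$-power. Proposition~\ref{proposition:triples_isomorphism} is tailor-made for this purpose, but one still has to engineer the character triple isomorphism coming from \cite[Theorem~11.28]{Isaacs} so that its hypothesis $A \le \Gamma'$ is met without losing any information about the extendibility of $\phi$.
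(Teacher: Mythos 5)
Your steps (1), (3) and the five-term-sequence bound $\abs{A \cap \Gamma'} \,\big|\, \abs{M(G/N)}$ are all fine, but the second step contains a genuine gap, and it is the step carrying all the weight. Theorem~11.28 of Isaacs gives \emph{some} isomorphic triple $(\Gamma,A,\lambda)$ with $A \leq \ze{\Gamma}$ cyclic and $\lambda$ faithful, but it gives no control whatsoever on $\abs{A}$: for instance $(G \times C_q,\, C_q,\, \lambda)$ with $\lambda$ faithful is such a triple isomorphic to $(G,1,1)$ for any prime $q$, so "$\abs{A}$ is a $p$-power" is simply not a property of the triple you fixed. Your device for forcing $A \leq \Gamma'$ --- "reduce to the subtriple supported in $A \cap \Gamma'$" --- is not a legitimate operation: because $\lambda$ is faithful, you cannot factor out any part of $A$ without destroying the character triple isomorphism with $(G,N,\phi)$ (and a complement to $A \cap \Gamma'$ in the cyclic group $A$ need not even exist), while keeping $A$ as is and merely restricting $\lambda$ changes the quotient $\Gamma/A$ and so again loses the isomorphism. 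Finally, Proposition~\ref{proposition:triples_isomorphism} cannot be quoted for a triple in which $A$ is only central: its hypothesis $A \leq \Gamma'$ is exactly what turns "the $p$-part of $\lambda$ extends to a linear character of $\Gamma$" into a contradiction, so with $A \not\leq \Gamma'$ the proposition gives you nothing about $o(\lambda)$.

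Two ways out. The paper avoids the issue by choosing the triple from the representation-group construction (Isaacs Theorem~11.17, Corollary~11.20 and Theorem~11.28 combined), which yields $A \leq \Gamma'$ \emph{and} $\abs{A} = \abs{M(G/N)}$ from the start; then Proposition~\ref{proposition:triples_isomorphism} forces $o(\lambda)$ to be a $p$-power dividing the $p'$-number $\abs{M(G/N)}$, so $\lambda = 1_A$ and Theorem~11.7 of Isaacs finishes --- no faithfulness and no five-term sequence needed. Alternatively, your outline can be repaired without claiming $\abs{A}$ is a $p$-power: for each prime $q \neq p$ you have $q \nmid \phi(1)o(\phi)$, so rerunning the argument inside the proof of Proposition~\ref{proposition:triples_isomorphism} (extension to a Sylow $q$-overgroup, then Isaacs Theorem~6.26) shows that $\lambda_q$ extends to a linear character of $\Gamma$ and is therefore trivial on $A \cap \Gamma'$; and since the five-term sequence makes $\abs{A \cap \Gamma'}$ prime to $p$, the $p$-part $\lambda_p$ is trivial there as well. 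Hence $\lambda$ is trivial on $A \cap \Gamma'$, which is your extension criterion. As written, however, the proof does not go through.
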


Notice that the corollary holds also if we replace $p$ with a set of primes $\pi$.

\begin{proof}
Let $(\Gamma,A,\lambda)$ be a character triple isomorphic to $(G,N,\phi)$ such that $A \leq \ze{\Gamma}$, $A \leq \Gamma'$ and $\abs{A} = \abs{M(G/N)}$ (see \cite[Theorem~11.17, Corollary~11.20 and Theorem~11.28]{Isaacs}). Then it follows from Proposition~\ref{proposition:triples_isomorphism} that $\lambda=1_A$ and we have by \cite[Theorem~11.7]{Isaacs} that $\phi$ extends to $G$.
\end{proof}

\begin{corollary}
\label{corollary:extension_nonabelian_quotient}
Let $G$ be a finite group and let $N \lhd G$ such that $G/N \cong S_1 \times ... \times S_k$ for some non-abelian simple groups $S_i$, $i=1,...,k$. Let $p$ be a prime and suppose that $p \nmid \abs{M(S_i)}$ for each $i \in \{1,...,k\}$. Suppose that $\phi \in \irr(N)$ is $G$-invariant and that $\phi(1)o(\phi)$ is a power of $p$. Then, $\phi$ has an irreducible extension $\chi$ to $G$ such that $o(\chi)=o(\phi)$.
\end{corollary}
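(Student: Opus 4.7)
The plan is to apply Corollary~\ref{corollary:extension_simple_groups} to obtain an extension $\chi$ of $\phi$ to $G$, and then exploit the fact that $G/N$ is perfect to verify automatically that $o(\chi)=o(\phi)$.

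First, I would check the hypotheses of Corollary~\ref{corollary:extension_simple_groups}. Since each $S_i$ is a non-abelian simple group, and therefore perfect, the Schur multiplier of the direct product decomposes as $M(G/N) \cong M(S_1) \times \cdots \times M(S_k)$: the cross-terms in the K\"unneth-type formula for the Schur multiplier of a direct product are controlled by the abelianizations $S_i^{\mathrm{ab}}$, which vanish. Hence $p \nmid |M(G/N)|$, and Corollary~\ref{corollary:extension_simple_groups} supplies some extension $\chi \in \irr(G)$ of $\phi$.

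It then remains to show $o(\chi)=o(\phi)$. Since $\det(\chi)$ is a linear character of $G$ whose restriction to $N$ equals $\det(\phi)$, we have immediately $o(\phi)=o(\det\phi) \mid o(\det\chi)=o(\chi)$. For the reverse divisibility, I would consider the linear character $\det(\chi)^{o(\phi)}$: its restriction to $N$ is trivial, so it factors through a linear character of $G/N$. But $G/N$ is a direct product of non-abelian simple groups, hence perfect, so its only linear character is the trivial one. Therefore $\det(\chi)^{o(\phi)}=1_G$, giving $o(\chi) \mid o(\phi)$ and the required equality.

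There is no serious obstacle here: the statement is essentially a packaging of Corollary~\ref{corollary:extension_simple_groups} together with the perfectness of $G/N$. The only bookkeeping point requiring mild care is the computation of $M(G/N)$, which rests on the fact that the Schur multiplier of a direct product of perfect groups is the direct product of the individual Schur multipliers.
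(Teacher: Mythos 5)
Your proof is correct, but it takes a genuinely different route from the paper's. The paper argues by induction on $k$: it picks $N \leq S \lhd G$ with $S/N \cong S_k$, applies Corollary~\ref{corollary:extension_simple_groups} to the single simple factor $S_k$ (whose multiplier is covered directly by the hypothesis), uses perfectness of $S/N$ to get uniqueness and hence $G$-invariance of the intermediate extension $\hat{\phi}$ together with $o(\hat{\phi})=o(\phi)$, and then inducts on $G/S \cong S_1 \times \cdots \times S_{k-1}$. You instead apply Corollary~\ref{corollary:extension_simple_groups} once to the full quotient, which forces you to know that $\abs{M(G/N)}$ is prime to $p$; your appeal to the K\"unneth-type formula $M(A \times B) \cong M(A) \times M(B) \times (A^{\mathrm{ab}} \otimes B^{\mathrm{ab}})$, with the cross-term vanishing because the $S_i$ are perfect, is a correct and standard fact, though it lies outside the toolkit (Isaacs's Chapter 11) that the paper confines itself to, so you should cite a source for it (e.g.\ Karpilovsky's book on the Schur multiplier). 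Your handling of $o(\chi)=o(\phi)$ is also clean and valid: $\det(\chi)$ restricts to $\det(\phi)$, giving $o(\phi) \mid o(\chi)$, and $\det(\chi)^{o(\phi)}$ is a linear character trivial on $N$, hence a linear character of the perfect group $G/N$, hence trivial, giving $o(\chi) \mid o(\phi)$. What each approach buys: yours is shorter, avoids the induction and the invariance bookkeeping for $\hat{\phi}$, and isolates exactly where perfectness is used; the paper's stays entirely within the references it already quotes and never needs the multiplier of a direct product.
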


\begin{proof}
We prove it by induction on $k$. Let $N \leq S \lhd G$ such that $S/N \cong S_k$, then it follows from Corollary~\ref{corollary:extension_simple_groups} that $\phi$ has an extension $\hat{\phi} \in \irr(S)$. Since $S/N$ is perfect, $\hat{\phi}$ is the only extension of $\phi$ to $S$ and it follows that it is $G$-invariant. Moreover, $\det(\hat{\phi})$ is the unique extension to $S$ of $\det(\phi)$ and it follows that $o(\hat{\phi})=o(\phi)$. Hence, $\hat{\phi} \in \irr(S)$ is a $G$-invariant character and $\hat{\phi}(1)o(\hat{\phi})=\phi(1)o(\phi)$ is a power of $p$. Since $G/S \cong S_1 \times ... \times S_{k-1}$, it follows by induction that $\hat{\phi}$ has an extension $\chi \in \irr(G)$ such that $o(\chi)=o(\hat{\phi})=o(\phi)$, and $\chi_N=\hat{\phi}_N=\phi$.
\end{proof}

The following lemma is, de facto, an extension of \cite[Lemma~6.2]{Navarro-Tiep:Rational_irreducible_characters} for (some) non-solvable groups. Since we are not working with $p$-solvable groups, we cannot just use the properties of Isaacs' $B_p$-characters, as it is done in \cite{Navarro-Tiep:Rational_irreducible_characters}. However, we prove that some properties of those characters still hold in our non-solvable context. It may be also possible to extend the definition of $B_p$-characters to non-solvable groups, by eventually imposing some conditions, on the group structure, which are weaker then the $p$-solvability. This, however, would go too far beyond the scope of our paper.

We should also mention that a weaker result then the following, rather technical lemma may have been sufficient for our purposes as well. However, our lemma could be of interest also of its own, as an extension of \cite[Lemma~6.2]{Navarro-Tiep:Rational_irreducible_characters}, and for this reason the author decided to include it here.

We should first introduce some notation. For $n \mathbb{N}$, we write $\Q_n$ to denote the $n$th-cyclotomic extension of $\Q$, i.e., the extension of $\Q$ by a primitive $n$th-root of unity. Moreover, if $G$ is a group, $a \in \aut(G)$, $U \leq G$ and $\phi \in \irr(U)$, then we define $(U,\phi)^a = (U^a, \phi^a)$. Notice that, in this case, $\phi^a \in \irr(U^a)$.

\begin{lemma}
\label{lemma:existing_over}
Let $G$ be a finite group, let $p$ be an odd prime, let $M \lhd G$ and suppose that, for each subgroup $M \leq H \leq G$ and each non-abelian composition factor $S$ of $H/M$, $p \nmid \abs{M(S)}$. Let $\phi \in \irr(M)$ and suppose there exists $U \leq M$ and $\tau \in \irr(U)$ such that $\phi=\tau^M$ and $\tau(1)o(\tau)$ is a power of $p$. Moreover, suppose that, for any automorphism $b$ of $M$ fixing $\phi$, $(U,\tau)$ and $(U,\tau)^b$ are conjugate in $M$. Then, there exists a subgroup $U \leq W \leq G$ and a character $\theta \in \irr(W \smid \tau)$ such that $\chi = \theta^G$ is irreducible and lies over $\phi$, $\theta(1)o(\theta)$ is a power of $p$ and, for any automorphism $c$ of $G$ fixing $M$ and $\chi$, $(W,\theta)$ and $(W,\theta)^c$ are conjugate in $G$.

Moreover, let $\sigma \in \gal(\cycl{\abs{G}}/\mathbb{Q})$ of order $p-1$ such that it fixes $p'$th-roots of unity, and write $\xi^{\sigma}=\xi^k$ when $\xi$ is any $p$th-root of unity. Let $a$ be an automorphism on $G$ which leaves both $M$ and $U$ invariant. Assume that the order of $a$ on its action on $G/M$ is not divisible by $p$. If $\tau^a = \tau^{\sigma}$, then $a$ normalizes $W$, $\theta^a = \theta^{\sigma}$, and $\chi^a=\chi^{\sigma}$.

Finally, assume that there is no element $x \in G/E$ of order $p$ such that $x^{a^{\sminus 1}} = x^k$ for every $a$-invariant subgroup $E$ with $M \leq E \lhd G$. If $\tau$ has values in $\cycl{p}$, then also both $\theta$ and $\chi$ have values in $\cycl{p}$.
\end{lemma}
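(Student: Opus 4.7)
The plan is to proceed by induction on $|G:M|$. The base case $G=M$ is immediate: take $W=U$, $\theta=\tau$, $\chi=\phi$, and every conclusion is a direct restatement of a hypothesis. For the inductive step, choose $N$ with $M<N\lhd G$ such that $N/M$ is a chief factor of $G$ (and $a$-invariant for the later assertions, which is possible since $a$ normalises $M$). Apply the inductive hypothesis first to $(N,M,U,\tau,\phi)$, obtaining a pair $(W',\theta')$ with $U\le W'\le N$, $\theta'\in\irr(W'\smid\tau)$, $\chi'=\theta'^{N}\in\irr(N)$ lying over $\phi$, $\theta'(1)o(\theta')$ a $p$-power, and the conjugacy property for automorphisms of $N$ fixing $\chi'$. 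Then apply the inductive hypothesis again to $(G,N,W',\theta',\chi')$, whose hypotheses transfer directly from the previous conclusion, to obtain the desired $(W,\theta)$ on $G$. Everything reduces to the atomic case where $G/M$ is itself a chief factor of $G$.

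In the atomic case, first reduce to $\phi$ being $G$-invariant by passing to the inertia group $T=I_G(\phi)$; the Schur-multiplier and conjugacy hypotheses survive, and Clifford correspondence lifts the construction from $T$ to $G$ (with attention to the behaviour of $o(\cdot)$ under induction from the inertia group). Then split by the type of $G/M$. If $G/M$ is a direct product of non-abelian simples, Corollary~\ref{corollary:extension_nonabelian_quotient} yields an extension $\hat{\phi}\in\irr(G)$ with $o(\hat{\phi})=o(\phi)$; by Gallagher's theorem $\irr(G\smid\phi)=\{\hat{\phi}\psi:\psi\in\irr(G/M)\}$, and one builds $(W,\theta)$ so that $\chi=\theta^G$ is the correct $\hat{\phi}\psi$ while the non-$p$-power part of $\chi(1)$ is absorbed into $[G:W]$, keeping $\theta(1)o(\theta)$ a $p$-power. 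If $G/M$ is elementary abelian of $q$-power order, handle $q\neq p$ by direct extension via Corollary~\ref{corollary:extension_simple_groups}, and $q=p$ by a Clifford correspondence on the new $p$-layer. In each instance, canonicity of the extension or Clifford correspondent, combined with the conjugacy hypothesis on $(U,\tau)$, gives the conjugacy property of $(W,\theta)$.

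For the second assertion, fix an $a$-invariant chief series of $G/M$ and carry the action of $a$ through the construction. At each stage the pair $(W',\theta')$, and eventually $(W,\theta)$, is canonical up to conjugation in the ambient group (this is precisely what the conjugacy statement in the first assertion provides), so $a$ sends it to a conjugate of itself while acting on the character by $\sigma$; the prime-to-$p$ order of $a$ on $G/M$ is used to avoid obstructions when passing through $p$-layers, and yields $\theta^a=\theta^{\sigma}$ and $\chi^a=\chi^{\sigma}$. For the third assertion, control the values inductively through the construction: the only possible obstruction to $\theta,\chi\in\cycl{p}$ comes from an element $x\in G/E$ of order $p$ satisfying $x^{a^{\sminus 1}}=x^k$ for some $a$-invariant $M\le E\lhd G$, and the hypothesis precludes such $x$, so every stage of the construction remains $\cycl{p}$-valued.

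The main obstacle I anticipate is the non-abelian chief-factor case: producing a $(W,\theta)$ that realises the correct element of $\irr(G\smid\phi)$ as an induced character, while simultaneously preserving the $p$-power condition on $\theta(1)o(\theta)$ and the canonicity/conjugacy-invariance property required for the Galois compatibility statements, without access to the $p$-solvable $B_p$-character machinery employed in \cite[Lemma~6.2]{Navarro-Tiep:Rational_irreducible_characters}. Reconciling the Clifford-theoretic choice of $(W,\theta)$ with the Galois action of $\sigma$ in the non-$p$-solvable setting is the essential novel ingredient.
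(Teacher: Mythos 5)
Your top-level skeleton (induction on $\abs{G:M}$, Clifford reduction at $T=I_G(\phi)$, then one layer at a time) does match the paper, but the essential construction is missing. You never use the hypothesis that $(U,\tau)$ is unique up to $M$-conjugacy among pairs inducing $\phi$; in the paper this is the engine of the invariant case: since $\phi$ is $G$-invariant, every $g\in L$ fixes $\phi$, so $(U,\tau)^g=(U,\tau)^h$ with $h\in M$, which yields $L=TM$ with $T=I_{\no{L}{U}}(\tau)$ and $T\cap M=U$, hence $T/U\cong L/M$; the new character $\gamma$ is then built on $T$ over $\tau$ (the canonical extension with $o(\gamma)=o(\tau)$ when $T/U$ is a $p'$-group or a product of non-abelian simples), and one checks $\gamma^L$ is an extension of $\phi$, so that induction applies to the quadruple $(L,\psi,T,\gamma)$. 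Your alternative --- extend $\phi$ to $\hat{\phi}$, pick a Gallagher product $\hat{\phi}\psi$, and ``absorb the non-$p$-power part of $\chi(1)$ into $[G:W]$'' --- is not a construction: producing $W\geq U$ and $\theta\in\irr(W\smid\tau)$ with $\theta(1)o(\theta)$ a $p$-power and $\theta^G$ irreducible is exactly the content of the lemma, and nothing in your sketch produces such a pair (in the paper the relevant constituent over $\phi$ at this layer is an extension, with no Gallagher twist). Likewise ``Clifford correspondence on the new $p$-layer'' cannot work when the layer is an abelian $p$-group and $\phi$ is invariant, since the inertia group is everything; the paper instead applies the fully ramified correspondence (Wolf's Lemma~2.2) simultaneously to $(L,M,\phi)$ and $(T,U,\tau)$, proves the two distinguished subgroups coincide, and obtains $\hat{\tau}^T=e\gamma$ and $\gamma^L=\psi$ by a degree-plus-multiplicity computation. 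The Galois and $\cycl{p}$-value claims are then not formal consequences of ``canonicity'': they require the coprime action of $\rho=\sigma a^{\sminus 1}$ on $H/U$, the decomposition $H/U=\ce{H/U}{\rho}\times[H/U,\rho]$, a Gallagher adjustment replacing $\hat{\tau}$ by $\nu\hat{\tau}$ to make it $\rho$-fixed, and the hypothesis that no $x$ satisfies $x^{a^{\sminus 1}}=x^k$ precisely to force $\ce{H/U}{\rho}=1$, so that $\hat{\tau}$ is the unique $\rho$-fixed constituent of $\tau^H$ and inherits the field of values of $\tau$. Your proposal asserts these conclusions without any mechanism, and this is the only place where that hypothesis and the condition $p\nmid$ (order of $a$ on $G/M$) actually enter.

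There is also a gap in your two-pass induction. Applying the lemma first to $(N,M,U,\tau,\phi)$ and then to $(G,N,W',\theta',\chi')$ does not transfer the hypotheses: the first application gives conjugacy of $(W',\theta')$ only under automorphisms of $N$ that normalize $M$ and fix $\chi'$, not under all automorphisms of $N$ fixing $\chi'$ as the second application would require; and the second application returns the conjugacy conclusion only for automorphisms of $G$ normalizing $N$, whereas the statement demands it for automorphisms normalizing $M$, which need not normalize a chief factor $N$ chosen merely to be $a$-invariant. The paper sidesteps both issues by taking $L$ minimal among normal subgroups above $M$ invariant under the whole group $\mathcal{H}$ of automorphisms of $G$ leaving $M$ invariant (so every relevant $c$ normalizes $L$), and by constructing the first layer $(L,\psi,T,\gamma)$ explicitly --- verifying the conjugacy property for $d\in\mathcal{H}$ through the uniqueness of $\gamma$, respectively of $\hat{\tau}$ --- rather than quoting the lemma twice.
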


\begin{proof}
We prove the lemma by induction on $\abs{G:M}$. Let $T=I_G(\phi)$ be the inertia subgroup of $\phi$ in $G$ and suppose first that $T < G$. By induction, there exists $U \leq W \leq T$ and $\theta \in \irr(W \smid \tau)$ such that $\theta^T = \psi \in \irr(T \smid \phi)$, then $\chi=\psi^G=(\theta^T)^G$ is irreducible and it lies over $\phi$. In particular, if $\tau$ has values in $\cycl{p}$, then by induction so does $\theta$ and, therefore, so does $\chi$, too. Moreover, if $c$ acts as an automorphism on $G$ preserving $M$ and it fixes $\chi$, then $\phi^c=\phi^g$ for some $g \in G$, since both $\phi$ and $\phi^c$ are irreducible constituents of $\chi_M$. In particular, $x = c g^{\sminus 1}$ normalizes $M$ and fixes $\phi$. It follows that $T^x=T$ and, since both $\chi$ and $\phi$ are fixed by $x$, then $x$ also fixes $\psi$. It follows by induction that $(W,\theta)^x = (W,\theta)^h$ for some $h \in T$ and, therefore, $(W,\theta)^c = (W,\theta)^{hg}$.

Finally, suppose that $\tau^a = \tau^{\sigma}$, where $a$ is the automorphism of $G$ we fixed in the second paragraph of the lemma, then by induction also $\theta^a=\theta^{\sigma}$. Notice that, for $g \in G$,
$$ (\theta^a)^G(g) = \frac{1}{\abs{W}} \sum_{x \in G} \theta^{\circ}(x^{a^{\sminus 1}}g^{a^{\sminus 1}}(x^{a^{\sminus 1}})^{\sminus 1}) = \frac{1}{\abs{W}} \sum_{x \in G} \theta^{\circ}(x g^{a^{\sminus 1}} x^{\sminus 1}) = \theta^G(g^{a^{\sminus 1}}) $$
and, therefore, $(\theta^a)^G = (\theta^G)^a$. Since also $(\theta^{\sigma})^G = (\theta^G)^{\sigma}$, we have that $\chi^a=\chi^{\sigma}$.

From now on, we can assume that $\phi$ is $G$-invariant. Let $\mathcal{H} \leq \Aut(G)$ consist of all the automorphisms of $G$ leaving $M$ invariant, so that $a \in \mathcal{H}$, and let $M < L \lhd G$ be minimal such that $\mathcal{H}$ leaves $L$ invariant. Notice that $L/M$ is either an elementary abelian group or it is the direct product of some non-abelian simple groups.

Let $g \in L$. Then $\phi^g=\phi$, because $\phi$ is $G$-invariant, and it follows from the hypotheses that $(U,\tau)^g=(U,\tau)^h$ for some $h \in M$. Let $N=\no{L}{U}$ and $T=I_N(\tau)$, then we have that $g h^{\sminus 1} \in T$ and, thus, $g \in TM$. It follows that $L=TM$. Moreover, we have that $T \cap M = I_{\no{M}{U}}(\tau) = U$, since $\tau^{T \cap M}$ is irreducible, and it follows that $T/U \cong L/M$.

We claim that there exists $\gamma \in \irr(T \smid \tau)$ such that $\gamma(1)o(\gamma)$ is a power of $p$ and $\gamma^L = \psi \in \irr(L \smid \phi)$. We also claim that, if $d \in \mathcal{H}$ fixes $\psi$, then $(T,\gamma)$ and $(T,\gamma)^d$ are conjugate in $L$. Moreover, if $U^a=U$ and $\tau^a=\tau^{\sigma}$, we claim that also $T^a=T$ and $\gamma^a=\gamma^{\sigma}$. Finally, if $\tau$ has values in $\cycl{p}$ and there is no element $x \in L/M$ of order $p$ such that $x^{a^{\sminus 1}} = x^k$, we claim that $\gamma$ has also values in $\cycl{p}$.

If $T/U \cong L/M$ is either a $p'$-group or the direct product of non-abelian simple groups, then $\tau$ extends to $T$ (see Corollary~\ref{corollary:extension_nonabelian_quotient} for the non-abelian case) and we can call $\gamma$ the only extension of $\tau$ to $T$ such that $o(\gamma)=o(\tau)$. We have that $[(\gamma^L)_M,\phi]=[(\gamma_U)^M,\phi]=[\tau^M,\phi]=1$, thus, $\phi$ is a constituent of $(\gamma^L)_M$. However, $\gamma^L(1)=\abs{L:T}\gamma(1)=\abs{M:U}\tau(1)=\phi(1)$ and it follows that $\gamma^L = \psi \in \irr(L)$ is an extension of $\phi$. If $d \in \mathcal{H}$ fixes $\psi$, then it also fixes $\phi$ and, by hypothesis, $(U,\tau)^d = (U,\tau)^g$ for some $g \in M$; it follows that $dg^{\sminus 1}$ normalizes $U$ and fixes $\tau$. In particular, $dg^{\sminus 1}$ also normalizes $T=I_N(\tau)$ and fixes $\gamma$ (because of the uniqueness properties of $\gamma$ as an extension of $\tau$). Hence, we have that $(T,\gamma)^d=(T,\gamma)^g$. Finally, if $a$ normalizes $U$ and $\tau^a = \tau^{\sigma}$, then $a$ also normalizes $T$, since $I_N(\tau)=I_N(\tau^{\sigma})$. By the uniqueness property, if $(\tau^{\sigma})^{a^{\sminus 1}} = \tau$, then also $(\gamma^{\sigma})^{ a^{\sminus 1}} = \gamma$ and, moreover, if $\tau$ has values in $\cycl{p}$, then so does $\gamma$. Hence, we have proved the claim in this case.

Assume now that $T/U \cong L/M$ is an abelian $p$-group and notice that, in this case, both the order and the degree of every irreducible constituent of $\tau^T$ are powers of $p$. By \cite[Lemma~2.2]{Wolf:Character_correspondences}, there exists a unique subgroup $E$ with $M \leq E \leq L$ maximal with respect to the property of $\phi$ having an $L$-invariant extension $\hat{\phi} \in \irr(E)$, and $\hat{\phi}$ is fully ramified in $L/E$. Since $L=TM$, there exists $U \leq H_1 \leq T$ such that $E=H_1M$. Moreover, if we apply \cite[Lemma~2.2]{Wolf:Character_correspondences} to $(T,U,\tau)$, we have that there exists a unique subgroup $H_2$ with $U \leq H_2 \leq T$ maximal with respect to the property of $\tau$ having a $T$-invariant extension $\hat{\tau} \in \irr(H_2)$, and $\hat{\tau}$ is fully ramified in $T/U$. Now, by proceeding as in the previous paragraph we have that $\hat{\tau}^{H_2M}$ lies over $\phi$ and, since $\hat{\tau}^{H_2M}(1) = \phi(1)$, it follows that $\hat{\tau}^{H_2M}$ extends $\phi$ to $H_2M$. Since $\hat{\tau}$ is $T$-invariant, then so is $\hat{\tau}^{H_2M}$, and it follows from the maximality of $E=H_1M$ that $H_2 \leq H_1$. On the other hand, we may notice that 
$$ [\hat{\phi}_U,\tau] = [\phi_U,\tau] = [\phi,\tau^M] = 1. $$
In particular, we can choose $\hat{\tau}$ such that it is the only irreducible constituent of $\hat{\phi}_{H_2}$ lying over $\tau$, since $[\hat{\phi}_{H_2},\tau^{H_2}] = [\hat{\phi}_U,\tau] = 1$, and we have that there exists $\eta \in \irr(H_1)$ such that it is the only irreducible constituent of both $\hat{\tau}^{H_1}$ and $\hat{\phi}_{H_1}$, since $[\hat{\phi}_{H_1},\tau^{H_1}] = [\hat{\phi}_{H_2},\tau^{H_2}] = 1$. In particular, since $ [\eta_U,\tau] \leq [\hat{\phi}_U,\tau] = 1 $ and $\tau$ is (by definition) $T$-invariant, we have that $\eta$ extends $\tau$. Suppose that $\eta^t \neq \eta$ for some $t \in T$; since both $\hat{\phi}$ and $\tau$ are $T$-invariant, both $\eta$ and $\eta^t$ are irreducible constituents of $\hat{\phi}_{H_1}$ lying over $\tau$, in contradiction with the fact that $[\hat{\phi}_{H_1},\tau^{H_1}] = 1$. Hence, $\eta$ is $T$-invariant and it follows that $H_1=H_2=H$, and $\hat{\tau}^{HM}=\hat{\phi}$ because they have the same degree. Now, since $\hat{\tau}$ is fully ramified in $T/H$ and $\hat{\phi}$ is fully ramified in $L/HM$, we have that $\hat{\tau}^T = e\gamma$ and $\hat{\phi}^L = e \psi$, with the constant $e$ defined by $e^2 = \abs{T:H} = \abs{L:HM}$. In particular, we have that $\gamma^L(1) = \abs{L:T}e\tau(1) = e \abs{M:U}\tau(1) = e\phi(1)=\psi(1)$ and, since
$$ e[\gamma^L,\psi] = [\gamma^L,\hat{\phi}^L] = [(\gamma^L)_{HM},\hat{\phi}] = e [\hat{\tau}^{HM},\hat{\phi}] = e > 0, $$
it follows that $\gamma^L=\psi$.

Suppose that $d \in \mathcal{H}$ fixes $\psi$, then it also fixes both $\hat{\phi}$ and $\phi$, since $\psi_M=e\phi$, and it follows from our hypothesis that $(U,\tau)^d = (U,\tau)^g$ for some $g \in M$. Hence, $x = d g^{\sminus 1}$ stabilizes $(U,\tau)$ and, therefore, it normalizes both $T$ and $H$. Since $\hat{\tau}$ is the only irreducible constituent of both $\hat{\phi}_H$ and $\tau^H$, and since $x$ fixes both $\hat{\phi}$ and $\tau$, we have that $x$ also fixes $\hat{\tau}$ and, thus, it also fixes $\gamma$. Hence, we have proved that $(T,\gamma)^x=(T,\gamma)$ and, therefore, $(T,\gamma)^d=(T,\gamma)^g$.

Finally, if $U^a=U$ and $\tau^a=\tau^{\sigma}$, then $N^a=N$, where $N = \no{L}{U}$, and $T^a = I_N(\tau^a) = I_N(\tau^{\sigma}) = T$. Moreover, since $H^a$ is maximal in $T$ such that $\tau^a=\tau^{\sigma}$ has a $T$-invariant extension to $H^a$, and $\tau^{\sigma}$ has a $T$-invariant extension if and only if so has $\tau$, we have that $H^a=H$. Notice that $H/U$ is an elementary abelian $p$-group. Define a function $\rho$ on $H/U$ such that $u^{\rho} = (u^k)^a$ for each $u \in H/U$, where $k$ was defined in the second paragraph of the lemma, and notice that it is an automorphism of $H/U$ of order $s$ coprime with $p$. If there is no element $x \in H/U$ of order $p$ such that $x^{a^{\sminus 1}} = x^k$, then $\ce{H/U}{\rho} = 1$.

Now, with a little abuse of notation, we shall define the function $\rho$ on $\irr(H)$ such that $\alpha^{\rho} = (\alpha^{\sigma})^{a ^{\sminus 1}}$, for every $\alpha \in \irr(V)$ and every $a$-invariant $V \leq H$. Notice that, since for every $\lambda \in \irr(H/U)$ and for every $x \in H/U$ we have that $\lambda^{\sigma}(x) = \lambda(x^k)$, then $\lambda^{\rho}(x) = \lambda(x^{\rho})$ and our abuse of notation is justified. In particular, we have that $\tau^{\rho}=\tau$ and, thus, $\hat{\tau}^{\rho} = \lambda \hat{\tau}$ for some $\lambda \in \irr(H/U)$, by Gallagher's theorem. Since, by coprime action, we have
$$ H/U = \ce{H/U}{\rho} \times [H/U,\rho], $$
then we can write
$$ \lambda = \mu (\nu^{\sminus 1})^{\rho} \nu, $$
for some $\mu,\nu \in \irr(H/U)$ with $\mu$ being fixed by $\rho$. Hence, by replacing $\hat{\tau}$ with $\nu \hat{\tau}$ we have that $\hat{\tau}^{\rho} = \mu \hat{\tau}$. It follows that $\hat{\tau}^{\rho^m} = \mu^m \hat{\tau}$ for every integer $m$ and, in particular, $\mu^s = 1_{H/U}$; since $o(\mu)=p$ is coprime with $s$, it follows that $\mu = 1_{H/U}$ and $\hat{\tau}$ is fixed by $\rho$. Moreover, if $\ce{H/U}{\rho} = 1$, then $\hat{\tau}$ is the only irreducible constituent of $\tau^H$ to be fixed by $\rho$ and, thus, $\tau$ and $\hat{\tau}$ have the same field of values. Finally, since $\hat{\tau}^T = e \gamma$, we have that $\gamma^{\rho} = \gamma$ and that $\tau$ and $\gamma$ have the same field of values. Now the claim is proved.

It follows from our claim that the quadruple $(L,\psi,T,\gamma)$ verifies the hypotheses of our lemma. Hence, by induction we have that there exists a subgroup $T \leq W \leq G$ and a character $\theta \in \irr(W \smid \gamma)$ such that $\theta(1)o(\theta)$ is a power of $p$ and $\theta^G = \chi \in \irr(G \smid \psi)$. In particular, $\theta$ also lies over $\tau$ and $\chi$ over $\phi$. 
Moreover, the induction hypothesis guarantees that $(W,\theta)^c$ and $(W,\theta)$ are conjugate in $G$ for every $c \in \mathcal{H}$ fixing $\chi$. If $U^a = U$ and $\tau^a=\tau^{\sigma}$, then the claim guarantees that $T^a=T$ and $\gamma^a=\gamma^{\sigma}$ and it follows by induction that also $W^a=W$, $\theta^a=\theta^{\sigma}$ and $\chi^a=\chi^{\sigma}$. Finally, if $\tau$ has values in $\cycl{p}$ and no element $x \in G/E$ of order $p$ is such that $x^{a^{\sminus 1}} = x^k$ for every $a$-invariant subgroup $E$ with $M \leq E \lhd G$, then $\gamma$ has values in $\cycl{p}$ and, by induction, the same is true for $\theta$ and for $\chi$. Hence, the lemma is proved.
\end{proof}

We now list some properties of rational elements, which can be found proved in \cite{Navarro-Tiep:Rational_irreducible_characters}, and they are reported also in \cite[Lemma~2.4]{Tiep-TongViet:Odd-degree_rational}.

\begin{lemma}
\label{lemma:rational_elements}
Let $G$ be a finite group and let $N \lhd G$.
\begin{enumerate}[(a)]
\item If $x \in G$ is rational in $G$, then $xN \in G/N$ is rational in $G/N$.
\item If $g \in H \leq G$ is rational in $H$, then $g$ is also rational in $G$.
\item Assume that $x \in G$ has prime order $p$. Then, $x$ is rational in $G$ if and only if there exists a $p'$-element $g \in G$ such that $x^g=x^t$, where $t$ (mod $p$) is any generator of the multiplicative group $\mathbb{Z}_p^{\times}$.
\item If $x \in G$ is rational, then every power of $x$ is also rational; moreover, its $\pi$-part $x_{\pi}$ is also rational for every set of primes $\pi$.
\item If $x \in G$, $\op{gdc}(o(x),\abs{N})=1$ and $xN$ is rational in $G/N$, then $x$ is rational in $G$.
\item If $G/N$ has a rational element of prime order $p$, then $G$ has also a rational element of order $p$.
\end{enumerate}
\end{lemma}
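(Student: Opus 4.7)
The plan is to dispatch each item (a)--(g) by a short elementary argument based on the definition of rationality. Parts (a) and (b) are essentially formal: for (a), any generator of $\langle xN \rangle$ is $(xN)^i$ with $i$ coprime to $o(xN)$, and using that $o(xN) \mid o(x)$ together with the Chinese Remainder Theorem we may choose $i$ coprime to $o(x)$, whence rationality of $x$ in $G$ produces the required conjugator modulo $N$; (b) is immediate from the definition. For (c), the conjugation action of $G$ on the cyclic group $\langle x \rangle$ of prime order $p$ factors through $\aut(\langle x \rangle) \cong \mathbb{Z}_p^\times$, a cyclic group of order $p-1$ coprime to $p$, so in any decomposition $g = g_p g_{p'}$ of an element $g$ with $x^g = x^t$ the $p$-part $g_p$ acts trivially on $\langle x \rangle$, and we may replace $g$ by its $p'$-part. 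For (d), $(x^k)^g = x^{ki}$ and CRT lets us choose $i$ coprime to $o(x)$ realising any prescribed residue modulo $o(x^k)$; the $\pi$-part $x_\pi$ is a specific power of $x$, so rationality of $x_\pi$ follows from the claim for powers.

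For (e), the hypothesis $\gcd(o(x), \abs{N}) = 1$ gives $o(xN) = o(x)$, and rationality of $xN$ in $G/N$ produces, for each $i$ coprime to $o(x)$, a $g \in G$ with $x^g \in x^i N$. Both $x^g$ and $x^i$ are $\pi$-elements of the single $N$-coset $x^i N$ inside $\langle x \rangle N$, where $\pi = \pi(o(x))$ and $N$ is a $\pi'$-group; by Schur--Zassenhaus (or directly, the uniqueness up to $N$-conjugacy of Hall $\pi$-subgroups in $\langle x \rangle N$) they are $N$-conjugate, proving $x$ rational in $G$. For (f), first lift $y$ to a $p$-element of order exactly $p$ by choosing a Sylow $p$-subgroup of the preimage of $\langle yN \rangle$ (which surjects onto $\langle yN \rangle$) and taking a suitable element; call this lift $x$. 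Then $xN$ remains rational in $G/N$, so (c) for $G/N$ yields a $p'$-element $g \in G$ with $(xN)^g = (xN)^t$ for a generator $t$ of $\mathbb{Z}_p^\times$, and a Sylow/Schur--Zassenhaus argument in $\langle x \rangle N$ shows that the two $p$-elements $x^g$ and $x^t$ of the same $N$-coset are $N$-conjugate, establishing rationality of $x$ in $G$.

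Part (g) is the most delicate. For the first claim, the assumption that $x$ has prime-power order $p^a$ forces all $x_i$ to be $p$-elements, and one combines the rationality relations $x^g = x^t$ (for $t$ ranging over $\mathbb{Z}_{p^a}^\times$) with the uniqueness of the direct product decomposition of elements of $H$ to pull the required conjugacies back onto each component $x_i$. For the second claim, with $H \lhd G$, conjugation by $G$ permutes the $S_i$'s and $\no{G}{S_i}$ is the stabilizer of $S_i$; one then restricts the rationalizing conjugators, using part (c) inside $\no{G}{S_i}$, to conclude that $x_i$ is actually rational in $\no{G}{S_i}$. The main obstacle is arranging, in (g), that the $G$-conjugators witnessing rationality of $x$ can be analyzed component-by-component in $H$; everywhere else, the arguments are essentially formal consequences of the definition of rationality combined with standard Sylow or Schur--Zassenhaus facts.
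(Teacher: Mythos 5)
Parts (a)--(e) of your proposal are sound, and in fact more self-contained than the paper, which simply quotes Lemmas~5.1 and 5.2 of Navarro--Tiep for everything except (g) and only writes out a proof of (g). The genuine problems are in (f) and (g).

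In (f) your argument breaks exactly where (f) differs from (e), namely when $p$ divides $\abs{N}$. First, a lift of order exactly $p$ mapping onto a generator of $\groupgen{yN}$ need not exist: for $G=C_{p^2}\rtimes C_{p-1}$ with faithful action and $N$ the subgroup of order $p$, every element of order $p$ lies in $N$, although $G/N$ has a rational element of order $p$ (and $G$ has one, inside $N$). Second, the closing step, that the two $p$-elements $x^g$ and $x^t$ of a common $N$-coset are $N$-conjugate, is false without the coprimality assumed in (e); Sylow or Schur--Zassenhaus arguments in $\groupgen{x}N$ need $p\nmid\abs{N}$. Concretely, in $G=(C_p\times C_p)\rtimes C_{p-1}$, where $C_{p-1}=\groupgen{g}$ acts by $a\mapsto a^t$ on the first factor and trivially on the second factor $N=\groupgen{b}$, the lift $x=ab$ of the rational element $aN$ gives $x^g=a^t b$ and $x^t=a^t b^t$, which lie in the same $N$-coset but are not even $G$-conjugate; indeed $ab$ is not rational, and only the lift $a$ is. So one cannot hope to prove the chosen lift rational. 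The actual proof (Navarro--Tiep, Lemma~5.2) instead lifts the $p'$-element to one normalizing a Sylow $p$-subgroup $P$ of the preimage of $\groupgen{yN}$ (Frattini argument) and uses its coprime action on $P$ to manufacture \emph{some} element of order $p$, not necessarily a lift of $yN$, on which it acts as the $t$-th power map, and then applies (c). That idea is absent from your sketch.

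In (g) you name the crux --- making the conjugators compatible with the decomposition --- but do not supply it, and it is not a formality. For $g\in G$ with $x^g=x^m$ one has $x^g=x_1^g\cdots x_n^g$, but $x_i^g$ need not lie in $S_i$, so uniqueness of the decomposition of $x^m$ in $H$ only matches the $S_i$-component of $x^m$ with some $x_j^g$, where $j$ depends on how $g$ moves the factors (if it permutes them at all); nothing "pulls back onto each component". This step is where all the content lies: for instance, in $(C_9\times C_3)\rtimes C_6$ with the order-$6$ automorphism $h\mapsto h^2k$, $k\mapsto k$, the element $hk$ is rational of order $9$ while $h$ is not rational in $G$; and even with non-abelian simple factors, $(a,a^3)$ with $a$ of order $7$ is rational in $\PSL_2(7)\wr C_2$ while $(a,1)$ is not, so no argument using only the direct-product structure of $H$ and the rationality of $x$ can succeed --- some structural input on the $S_i$ and on how $G$ moves them is indispensable. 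This is precisely where the paper's proof of (g) works: it exploits the prime-power order of $x$ (so the relevant exponents $m$ are the same for $x$ and for each $x_i$) and, in the normal case, deduces from $x_i^m\in S_i\cap S_i^g$ together with simplicity of the factors and $S_i,S_i^g\lhd H$ that $S_i^g=S_i$, hence $g\in\no{G}{S_i}$, which is what legitimizes reading off the $i$-th component. Your proposal never gets the conjugator to respect, or even permute, the factors, so both conclusions of (g) remain unproven as written; you should also note the resulting sensitivity of (g) to the precise hypotheses on the $S_i$, which the examples above make visible.
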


\begin{proof}
All these statements can be found in \cite[Lemmas 5.1 and 5.2]{Navarro-Tiep:Rational_irreducible_characters}.
\end{proof}

It is proved in \cite[Theorem~C]{Tiep-TongViet:Odd-degree_rational} that, if $G$ is an almost-simple group with socle $S \ncong \op{PSL}_2(3^f)$, for some odd integer $f \geq 3$, then $G$ has a rational irreducible character of even degree. We now prove that it may happen also when $S \cong \op{PSL}_2(3^f)$, provided that $G \neq S$.

\begin{proposition}
\label{proposition:almost-simple}
Let $S \cong \op{PSL}_2(3^f)$, for some odd integer $f$, and let $S < G \leq \op{Aut}(S)$, where we identify $S$ with $\op{Inn}(S)$.
\begin{enumerate}[(a)]
\item $G$ has a non-trivial rational element $x \in S$ of odd order if and only if $G$ has an irreducible rational character of even degree not containing $S$ in its kernel.
\item If there exists $S < H \leq G$ such that $H \cong \op{PGL}_2(3^f)$, then $G$ has an irreducible rational character of even degree not containing $S$ in its kernel, and a non-trivial rational element of odd order $x \in S$.
\end{enumerate}
\end{proposition}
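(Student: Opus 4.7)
The plan is to prove (b) by exhibiting an explicit rational element of odd order and then invoking (a) for the character; part (a) itself is the main content, proved by splitting according to whether the order of the rational element is $3$ (unipotent) or an odd prime $p\geq 5$ (semisimple), and matching these with the characters.

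For (b), I would take a unipotent element $u\in S$ of order $3$. Since $q = 3^f \equiv 3 \pmod 4$ (as $f$ is odd), $-1$ is a non-square in $\mathbb{F}_q^\times$, so the two $S$-conjugacy classes of non-trivial unipotent elements are represented by $u$ and $u^{-1}$. These two classes are interchanged by the diagonal outer automorphism of $S$ (which multiplies the unipotent parameter by a non-square) and are preserved by field automorphisms (as the Frobenius $x\mapsto x^3$ preserves squareness). Hence they fuse precisely inside $H\cong \PGL_2(3^f)$, giving $u\sim_H u^{-1}$; because $\phi(3)=2$, Lemma~\ref{lemma:rational_elements}(c) makes $u$ rational in $H$, and then in $G$ by Lemma~\ref{lemma:rational_elements}(b). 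Part (a), once proved, supplies the corresponding even-degree rational character.

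For (a), direction $(\Rightarrow)$: by Lemma~\ref{lemma:rational_elements}(d) we may assume $o(x)=p$ odd prime. If $p=3$ then $x$ is unipotent, and rationality of $x$ in $G$ fuses the two $S$-classes of unipotents, forcing $\PGL_2(3^f)\leq G$ by the discussion above. The two exceptional characters $\xi_1,\xi_2\in\irr(S)$ of degree $(q-1)/2$ are likewise interchanged by the diagonal outer automorphism, so $\xi_1$ is not $\PGL_2(3^f)$-invariant and $\xi_1^{\PGL_2(3^f)}\in\irr(\PGL_2(3^f))$ has degree $q-1$, takes rational values on $S$ (its restriction being the Galois-closed sum $\xi_1+\xi_2$) and vanishes off $S$; as $\{\xi_1,\xi_2\}$ is preserved setwise by every automorphism of $S$, this character is $G$-invariant and extends to a rational character of $G$ through the cyclic quotient $G/\PGL_2(3^f)$. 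If $p\geq 5$ then $x$ is semisimple in a cyclic torus $T\leq S$ of order dividing $(q\pm 1)/2$; the subgroup of $\Aut(T)$ induced by $N_G(T)$ is generated by the Weyl inversion $t\mapsto t^{-1}$ and the field action $t\mapsto t^3$ (the diagonal outer automorphism centralises $T$), so rationality of $x$ forces $\langle -1,3\rangle = (\mathbb{Z}/p)^\times$. The Galois-dual torus character $\alpha$ of order $p$ then defines a $G$-invariant rational character $R_\alpha$ or $R'_\alpha$ of $S$ of even degree $q+1$ or $q-1$, which extends to a rational character of $G$ by the same extension argument.

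The reverse direction $(\Leftarrow)$ inverts this analysis via Clifford theory: since $G/S$ is abelian, $\chi|_S$ is multiplicity-free, and an even-degree $\chi$ with $S\not\leq\ker\chi$ either has an odd-degree constituent $\xi_i$ fused by an even inertia index (forcing $\PGL_2(3^f)\leq G$ and producing a rational unipotent element as above), or has an even-degree torus-type constituent $R_\alpha$ or $R'_\alpha$ whose rationality in $G$ yields, by the same Galois duality, a rational torus element of odd prime order. The main obstacle in executing this plan is controlling the rational extension through the cyclic field-automorphism quotient, since a rational $G$-invariant character need not in general admit a rational extension; the remedy is to exploit that $G/\PGL_2(3^f)$ is cyclic of odd order $d\mid f$, whose group of linear characters has no non-trivial rational element, so the Galois ambiguity in the extension collapses.
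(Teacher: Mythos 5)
Your overall route for (a) is genuinely different from the paper's (which never matches characters to elements explicitly: it reduces to the case $G=S\langle\tau\rangle$ with $\tau$ a field automorphism of odd order, proves that the only odd-degree rational characters are $1_G$ and the extended Steinberg character and that the only rational elements of prime power order besides involutions are odd-order elements of $S$, and then gets both implications at once from Navarro--Tiep's equivalence ``more than two rational irreducible characters iff more than two rational conjugacy classes''). Your plan could in principle work, but as written it has a concrete gap in the forward direction for $p\geq 5$. The character $R_\alpha$ (degree $q+1$ or $q-1$) attached to a torus character $\alpha$ of order $p\geq 5$ is \emph{not} a ``$G$-invariant rational character of $S$'': its values on torus elements are $\alpha(t)+\alpha(t)^{-1}$, so its field of values is the real subfield of $\cycl{p}$, of degree $(p-1)/2>1$; and a generator of the field part of $G/S$ sends $R_\alpha$ to $R_{\alpha^{3^{f/d}}}$ (note also that your condition should read $\langle -1,3^{f/d}\rangle=(\mathbb{Z}/p)^{\times}$, with $d=|G/S|_{\,\mathrm{field}}$, not $\langle -1,3\rangle$), which under that very condition is different from $R_\alpha$ when $p\geq5$. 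So there is nothing to ``extend rationally''; what you actually need is to work in $\irr(G\smid R_\alpha)$: induce from the inertia group and show that among the resulting characters (all of the same even degree) there is a rational one. That is precisely the nontrivial point, and your closing remark that ``the Galois ambiguity collapses'' because an odd cyclic group has no nontrivial rational linear character is an assertion, not a proof: one needs a rational-extension statement through odd cyclic quotients (this is what the paper gets from repeated use of Isaacs, Theorem~6.30, and it is also the missing ingredient in your $p=3$/$\PGL$ case and in (b)), applied at the level of the inertia group, or an explicit cocycle/fixed-point argument.

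Two smaller points. In the converse direction, ``$G/S$ abelian implies $\chi_S$ multiplicity-free'' is not a valid inference in general; what saves you is that $G/S$ is \emph{cyclic} here (a subgroup of $C_2\times C_f$ with $f$ odd). And the step ``rationality of $\chi$ yields a rational torus element'' needs to be written out: one must evaluate $\chi$ on an element $t_p$ of odd prime order $p$ dividing $o(\alpha)$ (such a $p$ exists since $(q-1)/2$ is odd and the $2$-part of $(q+1)/2$ is $2$) and argue that rationality of the orbit sum forces the image of $\langle -1,3^{f/d}\rangle$ in $(\mathbb{Z}/p)^{\times}$ to be everything, whence $t_p$ is rational via the Weyl involution and powers of $\tau$; this is correct but is exactly the kind of bookkeeping the paper's counting argument avoids. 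If you want to keep your constructive approach, the repair is to prove (or quote) the rational-extension lemma for odd cyclic quotients and run the Clifford-theoretic step over $I_G(R_\alpha)$; if you are content with existence only, adopting the paper's use of Navarro--Tiep's Lemma~11.4 shortens the whole of part (a) considerably.
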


\begin{proof}
Suppose first that there exists $S < H \leq G$ such that $H \cong \op{PGL}_2(3^f)$, then we can see from \cite[Theorem~38.1]{Dornhoff:Group_Representation_TheoryA} and from \cite[Table~III]{Steinberg:The_representations_of} that there exists $\phi \in \irr(S)$ of degree $\phi(1) = \frac{3^f - 1}{2}$ such that $\phi^H=\eta \in \irr(H)$ is a $G$-invariant rational character of even degree $\eta(1) = 3^f - 1$. Since $G/H$ is a cyclic group of odd order dividing $f$, it follows from a repeated application of \cite[Theorem~6.30]{Isaacs} that $\eta$ has a rational extension $\psi \in \irr(G)$, of even degree. Moreover, we can notice that there exists $x = \begin{psmallmatrix}1 & 0\\ 1 & 1\end{psmallmatrix} \in \op{GL}_2(3^f)$, of order $3$, such that $x^h = x^{\sminus 1}$, where $h = \begin{psmallmatrix}1 & 0\\ 0 & -1\end{psmallmatrix}$. Since $x \notin \ze{\op{GL}_2(3^f)}$, the element $\hat{x} \in \op{PGL}_2(3^f)$ corresponding to $x$ under the canonical epimorphism is a rational element of order $3$. It follows that $H$ has a rational element of order $3$ and, thus, so does $G$.

We can now assume that $G/S$ is generated by a field automorphism $\tau$ of odd order dividing $f$, and we let $\psi \in \irr(S)$ be the Steinberg character of $S$ and $\hat{\psi}$ be its unique rational extension to $G$. We claim that, if $\chi \in \irr(G)$ is rational such that $\chi \notin \{1_G, \hat{\psi} \}$, then $\chi(1)$ is even and does not contains $S$  in its kernel. Suppose $\chi(1)$ is odd and let $\phi$ be an irreducible constituent of $\chi_S$, then $\phi \neq \psi, 1_S$ and $\phi(1)$ is odd, and it follows from \cite[Theorem~38.1]{Dornhoff:Group_Representation_TheoryA} that $\phi(1)=\frac{3^f - 1}{2}$ and $\phi$ is not rational. Since $\chi$ is rational, we have that $\phi$ is not the only irreducible constituent of $\chi_S$ and, since $\phi$ and $\bar{\phi}$ are the only two irreducible characters of $S$ of their degree, we have that $\chi(1) = e(\phi(1) + \bar{\phi}(1)) = 2e\phi(1)$, for some positive integer $e$, is even.

We now claim that, if $x \in G$ is rational of prime power order and $o(x) \notin \{1,2\}$, then $x \in S$ and $o(x)$ is odd. Suppose $o(x)$ is even; since $G = \groupgen{\tau} \ltimes S$ and there are no rational elements in $\groupgen{\tau}$, we have that $x \in S$. Now, let $K \cong \op{SL}_2(q)$, where $q=3^f$, so that $K/\ze{K} \cong S$. We can see from \cite[Theorem~38.1]{Dornhoff:Group_Representation_TheoryA} that there exists $a,b \in K$, with $o(a)=q-1$ and $o(b)=q+1$, such that $\ze{K} \leq \groupgen{a} \cap \groupgen{b}$, and every element of even order in $S \cong K/\ze{K}$ is conjugate to a power of either $a\ze{K}$ or $b\ze{K}$. Since $\abs{\ze{K}}=2$, $4 \nmid q - 1$ and $8 \nmid q + 1$, we have that $o(a\ze{K})$ is odd and $o(b\ze{K})_2 = 2$ and it follows our claim.

Now, \cite[Lemma~11.4]{Navarro-Tiep:Rational_irreducible_characters} guarantees that $G$ has more than two rational irreducible characters if and only if it has more than two rational conjugacy classes. Hence, if $x \in S$ of odd order is rational in $G$, then there exists $\chi \in \irr(G)$ rational such that $\chi \notin \{1_G, \hat{\psi} \}$, and it follows from our first claim that $\chi(1)$ is even. On the other hand, if $G$ has a rational irreducible character of even degree, then there exists $x \in G$ rational of prime power order (by Lemma~\ref{lemma:rational_elements}(d)) which does not belong to the (unique) conjugacy class of elements of order $2$ in $G$, and it follows from our second claim that $x \in S$ has odd order.
\end{proof}

The following proposition is a rather direct consequence of \cite[Theorem~C]{Tiep-TongViet:Odd-degree_rational} and of our Proposition~\ref{proposition:almost-simple}, and it allows us to determine which non-abelian simple groups can appear as composition factors in a group where all the rational irreducible characters have odd degree.

\begin{proposition}
\label{proposition:involved_groups}
Let $G$ be a finite group and let $M,L \lhd G$, with $L < M$, such that $M/L \cong S_1 \times ... \times S_n$ is the direct product of some isomorphic non-abelian simple groups $S_i \cong S$, for each $i=1,...,n$.
\begin{enumerate}[(a)]
\item If $S \ncong \op{PSL}_2(3^f)$ for any odd integer $f \geq 3$, then $G$ has a rational irreducible character of even degree and a rational element of odd order.
\item If $S \cong \op{PSL}_2(3^f)$, for some odd integer $f \geq 3$, and there exists $xL \in M/L$ of odd order which is rational in $G/L$, then $G$ has a rational irreducible character of even degree.
\end{enumerate}
\end{proposition}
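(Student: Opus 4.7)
The plan is to reduce both parts to a statement about the almost-simple group $A := N_{G/L}(S_1)/C_{G/L}(S_1)$, whose socle (isomorphic to $S_1 \cong S$) embeds naturally into $A$ since $S_1 \cap C_{G/L}(S_1) = \ze{S_1} = 1$. Inflation through $G \to G/L$ preserves rational characters and their degrees, and Lemma~\ref{lemma:rational_elements}(d),(f) lifts a rational element of odd prime order from $G/L$ to $G$, so it suffices to prove both conclusions for $G/L$. I would therefore assume $L = 1$ throughout.

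Next, I would produce a rational $\chi \in \irr(A)$ of even degree with $S_1 \not\leq \ker\chi$, and, in part (a), also a rational odd-order element of $G$. For part (a), Theorem~C of \cite{Tiep-TongViet:Odd-degree_rational} applies directly to $A$ (since $S \not\cong \op{PSL}_2(3^f)$), providing such a $\chi$ together with a rational element $y$ of odd prime order in the socle. The element $y \in S_1 \leq G$ is rational in $N_G(S_1)$---since conjugators in $A$ lift to $N_G(S_1)$ and induce the same automorphisms on $S_1$---and hence rational in $G$ by Lemma~\ref{lemma:rational_elements}(b). For part (b), write $x = x_1 \cdots x_n$ with $x_i \in S_i$; Lemma~\ref{lemma:rational_elements}(g) implies each $x_i$ is rational in $N_G(S_i)$, and choosing $x_1 \neq 1$ supplies a non-trivial rational odd-order element in the socle of $A$. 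Then Proposition~\ref{proposition:almost-simple}(a) applied to $A$ yields the required $\chi$.

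The remaining task, common to both parts, is to lift $\chi$ to a rational even-degree character of $G$. Let $\hat\chi \in \irr(N_G(S_1))$ be the inflation of $\chi$ through $N_G(S_1) \to A$; it is rational of even degree. I would claim that $\hat\chi^G \in \irr(G)$ and verify this via Mackey's irreducibility criterion. For $g \in G \setminus N_G(S_1)$, set $S_j := S_1^g$ with $j \neq 1$; the restriction $\hat\chi|_M$ has $C_G(S_1) \cap M = \prod_{i \neq 1} S_i$ in its kernel, so its irreducible constituents have the form $\alpha \otimes 1 \otimes \cdots \otimes 1$ with $\alpha \in \irr(S_1)$, and the hypothesis $S_1 \not\leq \ker \chi$ rules out $\alpha = 1_{S_1}$. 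Analogously, each irreducible constituent of $\hat\chi^g|_M$ is trivial on every factor except $S_j$, where it is non-trivial. These two families of irreducible characters of $M$ are manifestly disjoint, which propagates to disjointness of $\hat\chi$ and $\hat\chi^g$ over $N_G(S_1) \cap N_G(S_j) \supseteq M$. Mackey's criterion then yields $\hat\chi^G \in \irr(G)$, a rational character of even degree $[G : N_G(S_1)] \cdot \chi(1)$.

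The main obstacle is this last irreducibility step, whose Mackey analysis hinges essentially on $S_1 \not\leq \ker \chi$. This property is explicitly supplied by Proposition~\ref{proposition:almost-simple}(a) in part (b); for part (a), however, it must be extracted from the finer content of Theorem~C in \cite{Tiep-TongViet:Odd-degree_rational} (beyond the plain ``even-degree rational character'' statement quoted in the paper), as must the accompanying rational element of odd order---both relying on Tiep and Tong-Viet's detailed almost-simple analysis.
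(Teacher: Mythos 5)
Your proposal follows essentially the same route as the paper: reduce to $L=1$, pass to the almost-simple group $\no{G}{S_1}/\ce{G}{S_1}$, obtain there a rational character of even degree not containing the socle in its kernel via Theorem~C of \cite{Tiep-TongViet:Odd-degree_rational} in case (a) or via Lemma~\ref{lemma:rational_elements}(g) and Proposition~\ref{proposition:almost-simple} in case (b), and then induce it irreducibly to $G$. The only cosmetic differences are that you check irreducibility of the induced character with Mackey's criterion where the paper uses the Clifford correspondence over $\phi=\delta\times 1_{S_2}\times\cdots\times 1_{S_n}$, and that for the rational element of odd order in (a) the paper cites \cite[Theorem~11.1]{Navarro-Tiep:Rational_irreducible_characters} at the level of the simple group (together with Lemma~\ref{lemma:rational_elements}(f)) rather than extracting it from the almost-simple analysis behind Theorem~C.
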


\begin{proof}
Working by induction on $\abs{G}$, we can assume that $L=1$ and $M$ is a minimal normal subgroup. Let $N=\no{G}{S_1}$ and $C=\ce{G}{S_1} \leq N$, then $N/C$ is isomorphic to a subgroup of $\op{Aut}(S_1)$ containing $S_1$. If $S \ncong \op{PSL}_2(3^f)$, as in case (a), then it follows from \cite[Theorem~C]{Tiep-TongViet:Odd-degree_rational} that there exists $\hat{\psi} \in \irr(N/C)$ rational, of even degree and not containing $S_1C/C$ in its kernel. If $S \cong \op{PSL}_2(3^f)$ and there exists $1 \neq x \in S_1$ rational in $N$ of odd order, then $xC \in S_1C/C$ is rational in $N/C$ and it follows from Proposition~\ref{proposition:almost-simple} that also in this case there exists $\hat{\psi} \in \irr(N/C)$ rational, of even degree and not containing $S_1C/C$ in its kernel. Hence, in both of these cases, there exists $\psi \in \irr(N)$, containing $C$ in its kernel but not $S_1$, such that $\psi$ is a rational character of even degree. If $\delta \in \irr(S_1)$ is an irreducible constituent of $\psi_{S_1}$, and $\phi = \delta \times 1_{S_2} \times ... \times 1_{S_n} \in \irr(M)$, then $T = I_G(\phi) \leq N$ and there exists $\theta \in \irr(T \smid \phi)$ such that $\theta^N=\psi$. Hence, $\psi^G=(\theta^N)^G$ is an irreducible rational character of even degree.

Suppose now that no non-trivial element of $S_1$ of odd order is rational in $N$, and let $1 \neq x = x_1 \times ... \times x_n \in M$ of odd order be rational in $G$. Then, in particular, there exists $g \in G$, of order a power of $2$, such that $x^g = x^{\sminus 1}$. Without loss of generality, we can assume that $x_1 \neq 1$, hence, there exists $i \in \{1,...,n\}$ such that ${x_1}^{\sminus 1} = {x_i}^g$. Since we are assuming that $\abs{N/C}$ is odd, because of Proposition~\ref{proposition:almost-simple}(b), and since ${x_1}^{\sminus 1} \neq x_1$ because $o(x_1)$ is odd, we have that $g \notin N$. Let $m$ be minimal such that $g^m \in N$, and notice that $g^m \in C$ and $m > 1$ is a power of $2$. In particular, if we replace $g$ with $g^{m/2}$, we still have that $g \notin N$ and we also have that $g^2 \in C$. Let $j \in \{1,...,n\}$ such that ${S_1}^g = S_j$, then ${S_j}^g = {S_1}^{g^2} = S_1$ and $j \neq 1$. Without loss of generality, we may assume that $j=2$.
Consider now $\phi_1 \in \irr(S_1)$ of degree $\phi_1(1)=\frac{3^f - 1}{2}$ and notice that $\phi_1$ has values in $\Q_3$, $\phi_1 + \overline{\phi_1}$ is rational, and $\phi_1$ has a canonically defined extension to $N$. In fact, $\phi_1$ extends to $\theta_1 = 1_C \times \phi_1 \in \irr(C \times S_1)$, which in turn has a canonical extension $ \theta_2 \in \irr(Q)$, where $Q/C \times S_1 \in \syl{3}{N/C \times S_1}$, since $3 \nmid \theta_1(1)o(\theta_1)$; finally, since $\theta_2$ has values in $\Q_3$ and $(6,\abs{N/Q}) = 1$, then it has a unique extension to $N$ having values in $\Q_3$, by \cite[Theorem~6.30]{Isaacs}. Now, let $\phi_2 = {\phi_1}^g \in \irr(S_2)$ and let $\phi = \phi_1 \times \overline{\phi_2} \times 1_{S_3} \times ... \times 1_{S_n} \in \irr(M)$; notice that $\phi^g = \overline{\phi}$, that $\phi$ has a canonical extension to $T = I_G(\phi) = \no{G}{S_1} \cap \no{G}{S_2}$, and that $g$ normalizes $T$. Let $\theta \in \irr(T)$ be this canonical extension of $\phi$, then we have that $\theta$ and $\phi$ have the same field of values, $\theta^g = \overline{\theta}$, and, therefore, $\theta + \theta^g$ is rational. As a consequence, if we consider $K = \no{G}{S_1 \times S_2}$, we have that $\abs{K:T}=2$, $\psi = \theta^K \in \irr(K)$ is a rational character, and $\psi(1)=2\phi(1)$. It follows that $\psi^G = (\theta^K)^G$ is an irreducible rational character of even degree.


Finally, it follows from \cite[Theorem~11.1]{Navarro-Tiep:Rational_irreducible_characters} that, if $S \ncong \op{PSL}_2(3^f)$ for any odd integer $f \geq 3$, then $M/L$ contains a rational element of odd prime order. It follows from Lemma~\ref{lemma:rational_elements}(f) that also $M$ contains a rational element of odd prime order and, thus, so does $G$.
\end{proof}

We are now ready to prove our main theorem. Our proof will use many of the ideas of the proof of \cite[Theorem~A]{Tiep-TongViet:Odd-degree_rational}.

\begin{proof}[Proof of the Main Theorem]
Suppose, by contradiction, that $G$ is a counterexample of minimal order, and let $M \lhd G$ be a minimal normal subgroup. Since $\irr(G/M) \subseteq \irr(G)$, by minimality of $\abs{G}$ we have that $G/M$ does not contain any non-trivial rational element of odd order. Let $x \in G$ be a rational element of odd order, then it follows from Lemma~\ref{lemma:rational_elements}(a) that $xM=M$ and thus, $x \in M$. Moreover, Lemma~\ref{lemma:rational_elements}(d) allows us to assume that $x$ has $p$-power order, for some odd prime $p$.

Suppose that $M = S_1 \times ... \times S_n$ is the direct product of some non-abelian simple groups, then it follows from Proposition~\ref{proposition:involved_groups}(a) that, for each $i=1,...,n$, $S_i \cong \op{PSL}_2(3^f)$ for some odd integer $f \geq 3$, and then it follows from Proposition~\ref{proposition:involved_groups}(b) that $G$ has a rational irreducible character of even degree, in contradiction with our hypothesis.


Assume now that $M$ is an elementary abelian $p$-group, then $x$ has order $p$ and, by Lemma~\ref{lemma:rational_elements}(c), there exists a $p'$-element $1 \neq g \in G$ such that $x^{g^{\sminus 1}}=x^t$, where $t$ is a generator for $\mathbb{Z}_p^{\times}$. In particular, by duality, there exists $1_M \neq \lambda \in \irr(M)$ is such that $\lambda^g=\lambda^t$. Let $\sigma \in \gal(\Q_{\abs{G}}/\Q)$ of order $p-1$, fixing $p'$th-roots of unity and such that $\xi^{\sigma}=\xi^t$, where $\xi$ is a primitive $p$th-root of unity. We have that $\lambda^{\sigma}=\lambda^t=\lambda^g$. Moreover, we have that, for any subgroup $E$ with $M \leq E \lhd G$ and for any $y \in G/E$ of order $p$, $y^{g^{\sminus 1}} \neq y^t$, since otherwise it would follow from Lemma~\ref{lemma:rational_elements}(c) that $y$ is a rational element of $G/E$ of odd order, in contradiction with the minimality of $\abs{G}$. We can also notice that, by Proposition~\ref{proposition:involved_groups}(a), every non-abelian simple group involved in $G$ is isomorphic to $\op{PSL}_2(3^f)$, for some odd integer $f$. Now, every non-solvable subgroup of $\op{PSL}_2(3^f)$ is isomorphic to $\op{PSL}_2(3^e)$, for some integer $e \mid f$, by \cite[II.8.27~Hauptsatz]{Huppert:Endliche_Gruppen}, and $\abs{M(\op{PSL}_2(q))} = 2$ for every odd prime power $q \neq 9$. Hence, we have that $p \nmid \abs{M(S)}$ for every non-abelian composition factor $S$ of $H/M$, for every $M \leq H \leq G$, whenever $p$ is an odd prime. Hence, we can apply our Lemma~\ref{lemma:existing_over}, with $(M,\lambda)$ playing the part of both $(M,\phi)$ and $(U,\tau)$, and we have that there exists $\chi \in \irr(G \smid \lambda)$ having values in $\cycl{p}$ and such that $\chi^{\sigma}=\chi^g=\chi$.

Since $\chi$ has values in $\cycl{p}$ and $\sigma$ is a generator for $\gal(\Q_p/\Q)$, we have that $\chi$ is rational and, by hypothesis, $\chi(1)$ is odd. It follows from \cite[Lemma~2.1]{Tiep-TongViet:Odd-degree_rational} that $\lambda$ is a real-valued character and, since $p$ is odd, we have that $\lambda = 1_M$, in contradiction with our choice.
\end{proof}

\printbibliography[heading=bibintoc]

\end{document}